\newcommand{\w}{\omega}
\newcommand{\IQ}{\mathbb Q}
\newcommand{\IS}{\mathbb S}
\newcommand{\IN}{\mathbb N}
\newcommand{\e}{\varepsilon}
\newcommand{\F}{\mathcal F}
\newcommand{\U}{\mathcal U}
\newtheorem{theorem}{Theorem}
\newtheorem{corollary}[theorem]{Corollary}
\newtheorem{lemma}[theorem]{Lemma}
\theoremstyle{definition}
\newtheorem{remark}[theorem]{Remark}
\title{On the spread of topological groups containing subsets of the Sorgenfrey line}
\author{Taras Banakh, Igor Guran, Alex Ravsky}
\dedicatory{Dedicated to the 60th birthday of M.M. Zarichnyi}
\subjclass{22A05; 54A25; 54H11}
\keywords{Sorgenfrey line, topological group, spread}
\address{T.Banakh: Ivan Franko National University of Lviv (Ukraine) and Institute of Mathematics, Jan Kochanowski University in Kielce (Poland)}
\email{t.o.banakh@gmail.com}
\address{I. Guran:  Ivan Franko National University of Lviv, Ukraine}
\email{igor-guran@ukr.net }
\address{A.~Ravsky: Institute for Applied Problem of Mechanics and Mathematcis of Unkrainian Academy of Sciences, Lviv, Naukova 3b, Ukraine}
\email{alexander.ravsky@uni-wuerzburg.de}
\begin{document}
\begin{abstract} We prove that any topological group $G$ containing a subspace $X$ of the Sorgenfrey line has spread $s(G)\ge s(X\times X)$. Under OCA, each topological group containing an uncountable subspace of the Sorgenfrey line has uncountable spread. This implies that under OCA a cometrizable topological group $G$ is cosmic if and only if it has countable spread. On the other hand, under CH there exists a cometrizable Abelian topological group that has hereditarily Lindel\"of countable power and contains an uncountable subspace of the Sorgenfrey line. This cometrizable topological group has countable spread but is not cosmic.
\end{abstract}
\maketitle

\section*{Introduction}

The main result of this paper is the following theorem answering the problem \cite{MO1}, posed by the first author on MathOverflow.

\begin{theorem}\label{t:main1} Each topological group containing a topological copy of the Sorgenfrey line contains a discrete subspace of cardinality continuum. 
\end{theorem}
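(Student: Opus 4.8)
The plan is to reduce the theorem to producing inside $G$ a discrete subspace of cardinality $\mathfrak c$ built from the given copy of the Sorgenfrey line, exploiting that in $\IS\times\IS$ the anti-diagonal $\{(t,-t):t\in\mathbb R\}$ is a closed discrete subspace of cardinality $\mathfrak c$ (so $s(\IS\times\IS)=\mathfrak c$). Fix a topological embedding $f\colon\IS\to G$; after left-translating by $f(0)^{-1}$ we may assume $f(0)=e$. The two maps $t\mapsto f(t)$ and $t\mapsto f(-t)^{-1}$ are embeddings of $\IS$ and of the reversed Sorgenfrey line respectively (the latter because inversion in $G$ and $t\mapsto -t$ are homeomorphisms), so as $s\to t^{+}$ the element $f(s)$ tends to $f(t)$, while as $s\to t^{-}$ the element $f(-s)^{-1}$ tends to $f(-t)^{-1}$. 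The candidate discrete set is then $E=\{z_t:t\in\mathbb R\}$ with $z_t:=f(t)\,f(-t)^{-1}$, the image of the anti-diagonal under the continuous group operation $(x,y)\mapsto xy^{-1}$.

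First I would record the one-sided separation coming from the fact that $f$ is a homeomorphism onto its image: for every $t$ there is an open set $O_t\ni f(t)$ in $G$ with $O_t\cap f(\IS)\subseteq f\big([t,t+1)\big)$, so that $f(s)\notin O_t$ whenever $s<t$. Using this I would show that $E$ cannot accumulate along \emph{nearby} parameters. Suppose a net $z_{t_i}\to z_{t_0}$ with $t_i\to t_0$ in the usual topology of $\mathbb R$ and $t_i\ne t_0$; passing to a subnet lying on one side of $t_0$, continuity of multiplication and inversion applies. On the side $t_i\to t_0^{-}$ we have $-t_i\to -t_0^{+}$, whence $f(-t_i)\to f(-t_0)$, and therefore $f(t_i)=z_{t_i}\,f(-t_i)\to z_{t_0}\,f(-t_0)=f(t_0)$, contradicting $f(t_i)\notin O_{t_0}$ for $t_i<t_0$. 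Symmetrically, on the side $t_i\to t_0^{+}$ we get $f(t_i)\to f(t_0)$ and hence $f(-t_i)^{-1}=f(t_i)^{-1}z_{t_i}\to f(-t_0)^{-1}$, so $f(-t_i)\to f(-t_0)$, contradicting the separation $O_{-t_0}$ at $-t_0$ since $-t_i<-t_0$. Thus any accumulation of $E$ at $z_{t_0}$ must arise from parameters $t_i$ bounded away from $t_0$.

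The main obstacle is exactly this remaining possibility. Since $\IS$ is not metrizable, $G$ is necessarily non-metrizable, and a net $f(-t_i)$ with $-t_i\to -\tau^{-}$ may well converge in $G$ to a point outside $f(\IS)$; then $z_{t_i}$ can accumulate at some $z_{t_0}$ with $t_0\ne\tau$, so the full anti-diagonal image need not be discrete. To overcome this I would not take all of $\mathbb R$, but instead select a subfamily $T=\{t_\alpha:\alpha<\mathfrak c\}\subseteq\mathbb R$ along which such far accumulation is blocked, constructing it by transfinite recursion: at stage $\alpha$, having kept $\{z_{t_\beta}:\beta<\alpha\}$ closed and discrete, I would use the openness of the left-separating neighborhoods $O_t$ together with a counting argument on the fewer than $\mathfrak c$ parameters already used to choose $t_\alpha$ so that $z_{t_\alpha}$ avoids the closure of the earlier points and no earlier point lies in a witnessing neighborhood of $z_{t_\alpha}$. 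Verifying that closedness and discreteness survive the limit stages, and that at each stage the set of forbidden parameters has cardinality strictly below $\mathfrak c$, is the technical heart of the argument; once this is in place, the resulting set $\{z_{t_\alpha}:\alpha<\mathfrak c\}$ is the desired discrete subspace of $G$ of cardinality continuum.
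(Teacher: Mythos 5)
Your candidate set is essentially the paper's own: the paper also takes the products, inside $G$, of the two coordinates of a discrete "anti-diagonal" in the square of the Sorgenfrey copy (it uses $z\cdot f(z)$ for a strictly decreasing $f$; your $f(t)f(-t)^{-1}$ is the same device), and your computation ruling out accumulation along nearby parameters is sound. But the far-accumulation problem, which you correctly isolate as the heart of the matter, is not solved by your transfinite recursion; the gap sits exactly in the two claims you defer. First, there is no reason the set of forbidden parameters at stage $\alpha$ has size $<\mathfrak c$: what matters is not how many parameters have been used, but how many new parameters each used point kills, and you have no control over the closure operator of $G$ on $\{z_t:t\in\mathbb R\}$. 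The closure of even a single countable set of $z_t$'s (or a single witnessing neighborhood of one $z_t$) can a priori contain continuum many $z_t$'s --- this is precisely what "far accumulation" means --- so the greedy recursion can get stuck after countably many steps, and no counting argument is available to prevent this. Second, even granting every successor step, an increasing union of closed discrete sets need not be discrete; to survive limit stages each chosen point needs one fixed neighborhood excluding \emph{all} other chosen points, earlier and later, and your recursion only separates each new point from its predecessors. Both difficulties are symptoms of the same missing ingredient: a \emph{uniform} separation valid for all pairs simultaneously.

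That uniformity is what the paper's Lemma~\ref{l:main} manufactures, using the group structure together with a pigeonhole argument over the rationals. For each point $x$ of the Sorgenfrey copy $X\subset G$ one chooses a neighborhood $V_x$ of the identity with $X\cap(V_x^{-1}V_xx\cup xV_xV_x^{-1})\subset{\uparrow}x$ (the one-sided separation of the Sorgenfrey topology, transported through group translations), then picks rational radii $r_x$ with $[x,r_x)\subset xV_{f(x)}$, respectively $[x,r_x)\subset V_{f^{-1}(x)}x$, and uses the uncountable cofinality of $\mathfrak c$ to stabilize these rationals to two fixed values $c,d$ on a subfamily $Z$ of cardinality $\mathfrak c$. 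On $Z$ the separation becomes uniform: the single neighborhood $z(V_z\cap V_{f(z)})f(z)$ of $z\cdot f(z)$ excludes every other product $x\cdot f(x)$, by a two-case computation in the group (if $x>z$ one derives $f(x)\ge f(z)$, contradicting monotonicity; if $x<z$ one derives $x\ge z$). This stabilization-by-cofinality step, not a greedy transfinite selection, is what defeats far accumulation; without it (or something equally uniform) your recursion cannot be pushed through.
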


We recall that {\em the Sorgenfrey line} is the real line endowed with the topology, generated by the half-intervals $[a,b)$ where $a<b$ are arbitrary real numbers. The Sorgenfrey line endowed with the (continuous) operation of addition of real numbers is a classical example of a paratopological group, which is not a topological group, see \cite[1.2.1]{AT}. The Sorgenfrey line has countable spread and shows that Theorem~\ref{t:main1} cannot be generalized to paratopological groups.

Theorem~\ref{t:main1} follows from a more refined theorem evaluating the spread of a topological group that contains a topological copy of an uncountable subspace of the Sorgenfrey line.

We recall that for a topological space $X$ the cardinal
$$s(X)=\sup\{|D|:D\subset X\mbox{ is a discrete subspace of }X\}$$is called the  
 {\em spread} of $X$.
 
 \begin{theorem}\label{t:main} Assume that a topological group $G$ contains a subspace $X$, homeomorphic to an uncountable subspace of the Sorgenfrey line. Then $s(G)\ge s(X\times X)$.
 \end{theorem}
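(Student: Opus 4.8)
The plan is to work with an explicit embedding and to transport discrete subspaces of $X\times X$ into $G$ by means of the group operation. First I would fix a homeomorphism $\iota\colon X\to \iota(X)\subseteq\IS$ onto an uncountable subspace of the Sorgenfrey line, so that we may assume $X\subseteq G$ carries simultaneously the linear order inherited from $\mathbb R$ and the Sorgenfrey topology, which coincides with the subspace topology induced from $G$. The single fact I would extract from this coincidence is the following \emph{upward control}: for every $x\in X$ and every $\e>0$ there is a neighborhood $W$ of the identity $e$ in $G$ with $xW\cap X\subseteq[x,x+\e)$, and conversely every Sorgenfrey neighborhood of $x$ contains the trace on $X$ of a neighborhood of $x$ in $G$. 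Equivalently, for a net $(x_j)$ in $X$ one has $x_j\to x$ in $G$ if and only if $x_j\downarrow x$ in the real order; this one-sided (asymmetric) description of convergence inside $X$ is the resource the whole argument must exploit.

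Next I would set $\kappa=s(X\times X)$, fix a discrete subspace $D\subseteq X\times X$ with $|D|$ as close to $\kappa$ as desired, and aim to produce a discrete subspace of $G$ of the same cardinality as the image of $D$ under a map built from the group operations, the natural candidate being $f(x,y)=x^{-1}y$ (or $xy^{-1}$). The motivation is the anti-diagonal: if $D=\{(a,\sigma(a)):a\in A\}$ for a strictly decreasing $\sigma\colon A\to X$ (the canonical witness that $X\times X$ can have large spread while $X$ itself has countable spread), then $D$ is discrete in $\IS\times\IS$, and I would try to show that $\{a^{-1}\sigma(a):a\in A\}$ is discrete in $G$. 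Here is where continuity of \emph{inversion}, i.e.\ the topological-group hypothesis that fails for the paratopological Sorgenfrey line, enters as the engine: approaching a parameter $a_0$ from above forces $a\to a_0$ in $G$ but prevents $\sigma(a)\to\sigma(a_0)$ (since $\sigma(a)$ then decreases, i.e.\ approaches from the wrong side), while approaching from below reverses the roles; combined with the identity $\sigma(a)^{-1}=a^{-1}\bigl(a^{-1}\sigma(a)\bigr)^{-1}\cdot a$-type manipulations, convergence of the product $a^{-1}\sigma(a)$ to $a_0^{-1}\sigma(a_0)$ together with convergence of one factor would force convergence of the other, which the asymmetry forbids. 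This yields isolation of each image point against nets that respect the order structure.

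The hard part, which I expect to be the main obstacle, is precisely that $f$ compresses two coordinates into one group element: knowing only that $f(x',y')$ lies in a neighborhood of $f(x,y)$ does \emph{not} a priori let one recover that $x'$ is close to $x$ and $y'$ is close to $y$, because the two factors could move in opposite directions and cancel. The pointwise asymmetry above controls nets whose parameters actually converge in $\mathbb R$, but a general net accumulating at $f(x,y)$ need not have convergent parameters, so the factor-recovery step is not automatic. I would resolve this in two stages: first refine $D$, by a Ramsey/$\Delta$-system type argument that does not decrease the cardinality below $\kappa$, to a subfamily on which the two coordinates are comparably monotone and mutually separated (so that cancellation of the kind above cannot occur); and then run the isolation argument of the previous paragraph on this refined family, using the upward control to manufacture, for each point, an explicit neighborhood in $G$ of the form built from the one-sided neighborhoods $xW$ and $yW'$ that meets the image only in that point. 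Carrying out this refinement while preserving cardinality, and verifying that the constructed neighborhoods genuinely separate the product elements in the non-abelian case, is the technical heart; once it is done, the injective image of the refined $D$ is a discrete subspace of $G$ of cardinality approaching $s(X\times X)$, giving $s(G)\ge s(X\times X)$.
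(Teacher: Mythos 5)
Your outline follows the same road the paper takes: identify $X$ with a subspace of $G$ carrying the Sorgenfrey topology, extract one-sided ``upward control'' neighborhoods, reduce the discrete set to the graph of a strictly decreasing function, push it into $G$ by the group operation (the paper uses $(x,y)\mapsto x\cdot f(x)$ rather than $x^{-1}f(x)$; this is immaterial), and isolate the image points by explicit neighborhoods. But the step you explicitly defer as the ``technical heart'' is where the entire proof lives, and your sketch is missing the one idea that makes it work. The net-based heuristic (convergence of the product plus convergence of one factor forces convergence of the other) only rules out accumulation along nets whose parameters are order-convergent; as you yourself note, an arbitrary point of the image landing in a neighborhood of $z\cdot f(z)$ gives no a priori control on its parameters. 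The paper's resolution is not a Ramsey/$\Delta$-system refinement to coordinates that are ``comparably monotone and mutually separated'' (which is not a well-defined condition and does not obviously block cancellation); it is a \emph{uniformization of rational control parameters}. Concretely: for each $x\in X$ choose a neighborhood $V_x$ of $e$ with $X\cap(V_x^{-1}V_xx\cup xV_xV_x^{-1})\subset{\uparrow}x$ (note both left and right versions are needed in the non-abelian case), then a rational $r_x>x$ with $[x,r_x)\subset xV_{f(x)}$ for $x$ in the domain and $[x,r_x)\subset V_{f^{-1}(x)}x$ for $x$ in the range, and then pass to a subset $Z$ on which $r_z=c$ and $r_{f(z)}=d$ are \emph{constant} rationals. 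Constancy is the crux: for any two distinct $x,z\in Z$ one automatically has $x\in[z,c)\subset zV_{f(z)}$ when $x>z$, and $f(x)\in[f(z),d)\subset V_zf(z)$ when $x<z$, so if $x\cdot f(x)$ lay in the neighborhood $z(V_z\cap V_{f(z)})f(z)$ one could cancel against the controlled factor and contradict the ${\uparrow}$-control (e.g.\ $f(x)\in V_{f(z)}^{-1}V_{f(z)}f(z)\cap X\subset{\uparrow}f(z)$, impossible for $x>z$ since $f$ is strictly decreasing). This pairwise, order-free isolation of \emph{every} pair of points in $Z$ is exactly the factor-recovery mechanism your proposal lacks.

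There is a second, smaller gap in the cardinality bookkeeping. Every constancy refinement above partitions the set into countably many classes (indexed by pairs of rationals), so it preserves cardinality only when that cardinality has \emph{uncountable cofinality}; for a cardinal like $\aleph_\omega$ all classes could be strictly smaller. Since $s(X\times X)$ is a supremum that need not be attained, your plan of taking $D$ ``as close to $\kappa$ as desired'' and refining it ``without decreasing cardinality'' does not go through as stated. The paper avoids this by arguing by contradiction: if $s(G)<s(X\times X)$, then $X\times X$ contains a discrete set of cardinality $s(G)^+$, a successor (hence regular) cardinal, to which the refinement lemma applies, yielding a discrete subspace of $G$ of cardinality $s(G)^+$ --- a contradiction. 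Your proposal needs this (or an equivalent reduction to regular cardinals) to turn the approximate statement into $s(G)\ge s(X\times X)$.
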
 
 
 Theorems~\ref{t:main1} and \ref{t:main} will be proved in Section~\ref{s:main}. 
Theorem~\ref{t:main} has the following corollary holding under OCA (the Open Coloring Axiom, see \cite[\S8]{Tod}).

\begin{corollary}\label{c:OCA} Under OCA any topological group $G$ containing an uncountable subspace $X$ of the Sorgenfrey line  has uncountable spread.
\end{corollary}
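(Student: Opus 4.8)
The plan is to derive the corollary from Theorem~\ref{t:main} together with a purely combinatorial claim: under OCA every uncountable subspace $X$ of the Sorgenfrey line satisfies $s(X\times X)>\w$. Since Theorem~\ref{t:main} gives $s(G)\ge s(X\times X)$, it suffices to produce an uncountable discrete subspace of the Sorgenfrey square $X\times X$. I would regard $X$ as an uncountable set of reals and equip $X\times X$ with the coordinatewise partial order $\preceq$, where $(a,b)\preceq(a',b')$ iff $a\le a'$ and $b\le b'$. The first observation is that any $\preceq$-antichain $D\subseteq X\times X$ (a set of pairwise $\preceq$-incomparable points, i.e.\ a strictly decreasing set in the plane) is discrete in the Sorgenfrey square: in such a $D$ all first coordinates are distinct, and for $(a,b)\in D$ every other $(a',b')\in D$ satisfies either $a'<a$, or $a'>a$ together with $b'<b$; in both cases the basic neighbourhood $[a,a+1)\times[b,b+1)$ of $(a,b)$ misses $(a',b')$. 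Hence it is enough to find an \emph{uncountable} $\preceq$-antichain in $X\times X$.

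To find one, I would apply OCA to the separable metrizable space $W=X\times X$ (with its Euclidean topology) and the partition $[W]^2=K_0\cup K_1$, where $K_0$ consists of the $\preceq$-incomparable pairs $\{(a,b),(a',b')\}$, i.e.\ those with $(a-a')(b-b')<0$. This relation is symmetric and cut out by strict inequalities, so $K_0$ is open. OCA then offers two alternatives: either (a)~$W$ contains an uncountable $K_0$-homogeneous set, which is exactly an uncountable $\preceq$-antichain and finishes the proof, or (b)~$W=\bigcup_{n\in\w}W_n$ where each $W_n$ is $K_1$-homogeneous, that is, a $\preceq$-chain (any two of its points being $\preceq$-comparable, since pairs sharing a coordinate are comparable as well).

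The main obstacle, and the heart of the argument, is to rule out alternative (b); pleasantly, this is a theorem of ZFC. Suppose $X\times X=\bigcup_{n\in\w}W_n$ with each $W_n$ a $\preceq$-chain, and let $\gamma(a,b)$ be an index $n$ with $(a,b)\in W_n$. Fixing $a\in X$ and using that the uncountable row $\{a\}\times X$ is coloured by countably many values of $\gamma$, choose an index $n_a$ with $X_a:=\{b\in X:\gamma(a,b)=n_a\}$ uncountable, and set $X_n=\{a\in X:n_a=n\}$, so that $X=\bigcup_{n\in\w}X_n$. I claim each $X_n$ is countable. Indeed, for $a<a'$ in $X_n$ and any $b\in X_a$, $b'\in X_{a'}$, the points $(a,b)$ and $(a',b')$ lie in the common chain $W_n$, hence are $\preceq$-comparable; as $a<a'$ this forces $b\le b'$. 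Thus $\sup X_a\le\inf X_{a'}$, and since each $X_a$ is uncountable the nonempty open intervals $(\inf X_a,\sup X_a)$ with $a\in X_n$ are pairwise disjoint. As $\mathbb R$ admits only countably many pairwise disjoint nonempty open intervals, $X_n$ is countable, whence $X=\bigcup_{n\in\w}X_n$ is countable, a contradiction.

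Consequently, under OCA alternative (b) cannot occur, so OCA delivers an uncountable $\preceq$-antichain $D\subseteq X\times X$, i.e.\ an uncountable discrete subspace of the Sorgenfrey square. Therefore $s(X\times X)\ge|D|>\w$, and Theorem~\ref{t:main} yields $s(G)\ge s(X\times X)>\w$, as required. I would close with a remark that the elimination of (b) is consistent with the CH example mentioned in the abstract, whose square has countable spread: that example only shows that, in the absence of OCA, the poset $(X\times X,\preceq)$ may have \emph{neither} an uncountable antichain \emph{nor} a countable cover by chains, so it is OCA that forces the dichotomy into case~(a).
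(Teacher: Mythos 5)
Your proof is correct, and it takes a genuinely different route from the paper's. The paper disposes of the combinatorial core by citation: Proposition~8.4(c) of \cite{Tod} states that under OCA an uncountable subspace $X$ of the Sorgenfrey line contains an uncountable $Z$ admitting a strictly decreasing $f:Z\to X$; the graph of $f$ is then a discrete subspace of $X\times X$, and Theorem~\ref{t:main} finishes. You instead unwind that black box: you apply the axiom OCA itself to the separable metric space $X\times X$ (Todor\v cevi\'c's formulation of OCA does apply to arbitrary separable metric spaces, and your coloring $K_0$, cut out by the strict inequality $(a-a')(b-b')<0$, is indeed open), and then you eliminate the chain-decomposition alternative by a correct ZFC counting argument: a countable cover of $X\times X$ by coordinatewise chains would produce, inside one $X_n$, uncountably many pairwise disjoint nonempty open intervals $(\inf X_a,\sup X_a)$ in $\mathbb{R}$, which is impossible, forcing $X$ to be countable. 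Note that the object you extract is literally the same as the paper's: an uncountable antichain in $X\times X$ has pairwise distinct first coordinates and is exactly the graph of a strictly decreasing function defined on an uncountable subset of $X$, so in effect you have reproved the needed instance of Todor\v cevi\'c's Proposition~8.4(c). What the paper's approach buys is brevity, at the cost of sending the reader to \cite{Tod}; what yours buys is a self-contained argument that makes explicit where OCA enters, plus a clean ZFC lemma of independent interest (the square of an uncountable set of reals is never a countable union of coordinatewise chains). Your closing remark is also on target: it correctly reconciles that ZFC lemma with the CH example of Theorem~\ref{t:CH}, where both alternatives of the dichotomy fail simultaneously, so it is precisely OCA that forces case~(a).
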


\begin{proof} Proposition 8.4(c) of \cite{Tod} implies that $X$ contains an uncountable subset $Z$ admitting a strictly decreasing function $f:Z\to X$ (with respect to the linear order inherited from the real line). Then $D=\{(x,f(x)):z\in Z\}$ is a discrete subspace of $X\times X$ and hence $s(G)\ge s(X\times X)\ge|D|=|Z|>\omega$.
\end{proof}

We shall apply Corollary~\ref{c:OCA} to detect cosmic topological groups among cometrizable topological groups.

A topological space $X$ 
\begin{itemize}
\item is {\em cosmic} if it is a continuous image of a separable metrizable space;
\item is {\em cometrizable} if $X$ admits a weaker metrizable topology such that each point has a (not necessarily open) neighborhood base consisting of sets which are closed in the metric topology. 
\end{itemize}

Cometrizable spaces were introduced by Gruenhage in \cite{Gru89}. The interplay between cometrizable spaces and other generalized metric spaces was studied in \cite{MO2} and \cite{BRS}. It is is proved in \cite{MO2} and \cite{BRS} that the class of cometrizable spaces includes all stratifiable and all sequential $\aleph_0$-spaces; on the other hand, there exists a countable (and hence cosmic) space, which is not cometrizable.

  In \cite{Gru89} Gruenhage proved that under PFA a regular cometrizable space $X$ is cosmic if and only if $X$ has countable spread and contains no uncountable subspace of the Sorgenfrey line. In \cite[8.5]{Tod} Todor\v cevi\'c observed that this characterization remains true under OCA (which is a weaker assumption than PFA).
Unifying Theorem 8.5 \cite{Tod} of Todor\v cevi\'c with Corollary~\ref{c:OCA}, we obtain the following OCA-characterization of cosmic topological groups.

\begin{corollary}\label{c:OCA2} Under OCA, a cometrizable topological group is cosmic if and only if it has countable spread.
\end{corollary}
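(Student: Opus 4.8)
The plan is to derive this statement by combining Todor\v cevi\'c's characterization (Theorem 8.5 of \cite{Tod}) with Corollary~\ref{c:OCA}, handling the two implications of the equivalence separately.

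The forward implication is soft and requires no set-theoretic assumption: if $G$ is cosmic, then $G$ is a continuous image of a separable metrizable space and hence has a countable network. Since the spread of any space is bounded by its network weight (given a discrete subspace $D$, each point $d\in D$ can be isolated inside $D$ by an open set, and choosing a network member trapped between $d$ and that open set yields an injection of $D$ into the network), we conclude $s(G)\le\w$.

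For the reverse implication I would invoke Todor\v cevi\'c's theorem, which asserts that under OCA a regular cometrizable space is cosmic precisely when it has countable spread and contains no uncountable subspace of the Sorgenfrey line. A Hausdorff topological group is completely regular, so $G$ satisfies the regularity hypothesis, and it is cometrizable by assumption; thus it remains only to verify the two conditions of Todor\v cevi\'c's theorem. Countable spread is given. To exclude an uncountable Sorgenfrey subspace I would argue by contradiction: if $G$ contained a subspace $X$ homeomorphic to an uncountable subspace of the Sorgenfrey line, then Corollary~\ref{c:OCA} would force $s(G)>\w$, contradicting the hypothesis $s(G)\le\w$. Hence no such $X$ exists, and Todor\v cevi\'c's theorem yields that $G$ is cosmic.

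The entire content of the statement is the observation that, for topological groups, the two conditions appearing in Todor\v cevi\'c's characterization collapse into one: Corollary~\ref{c:OCA} shows that under OCA the mere presence of an uncountable Sorgenfrey subspace already entails uncountable spread inside any topological group. Consequently there is no genuine obstacle beyond correctly assembling the cited results. The only point deserving a moment's care is confirming that ``topological group'' is understood to be Hausdorff (hence Tychonoff), so that the regularity hypothesis of Todor\v cevi\'c's theorem is legitimately in force.
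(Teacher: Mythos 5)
Your proposal is correct and follows exactly the paper's (implicit) argument: the paper states that Corollary~\ref{c:OCA2} is obtained by ``unifying'' Todor\v cevi\'c's Theorem 8.5 with Corollary~\ref{c:OCA}, which is precisely your decomposition, with the easy direction (cosmic $\Rightarrow$ countable network $\Rightarrow$ countable spread) and the regularity check filled in as routine details. Nothing in your write-up deviates from or adds a gap to the paper's route.
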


It is interesting that this OCA-characterization of cosmic cometrizable groups does not hold under the Continuum Hypothesis (briefly, CH).

\begin{theorem}\label{t:CH} Under CH there exists a cometrizable topological group $G$ that contains an uncountable subspace of the Sorgenfrey line (and hence is not cosmic) but has hereditarily Lindel\"of countable power $G^\w$ (and hence $G^\w$ has countable spread).
\end{theorem}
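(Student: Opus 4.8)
The plan is to build the group in two layers: a combinatorial ``Sorgenfrey core'' $X\subseteq\mathbb R$ and a cometrizable group topology wrapped around it, after which the two parenthetical implications come for free. Indeed, if $G$ were cosmic it would have a countable network, whose trace on an uncountable $X\subseteq\IS$ would again be a network; but for each $x\in X$ a network element contained in $[x,x+1)\cap X$ has $x$ as its minimum, so distinct points of $X$ give distinct network elements and $X$ would be countable -- hence an uncountable Sorgenfrey subspace forces non-cosmicity. And since a discrete subspace of a hereditarily Lindel\"of space is itself Lindel\"of, hence countable, hereditary Lindel\"ofness of $G^\w$ yields countable spread. So the whole theorem reduces to producing, under CH, a group $G$ that is cometrizable, contains an uncountable $X\subseteq\IS$, and satisfies $G^\w$ hereditarily Lindel\"of.

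First I would construct, by transfinite recursion of length $\w_1$, an uncountable set $X=\{x_\alpha:\alpha<\w_1\}\subseteq\mathbb R$ such that for every $n$ the power $X^n$ with the Sorgenfrey product topology is hereditarily Lindel\"of, i.e. has no uncountable right-separated subspace. Recall that a space is hereditarily Lindel\"of precisely when it admits no uncountable right-separated subspace, so at stage $\alpha$ the point $x_\alpha$ must be chosen to destroy every emerging uncountable right-separated family in every finite Sorgenfrey power at once; CH is used to enumerate along $\w_1$ all such candidate families, each coded by the countable data of finitely many coordinates together with rational-endpoint Sorgenfrey boxes, and to diagonalize against them. This is the combinatorial heart of the argument, and the step I expect to be the \emph{main obstacle}, because one must simultaneously keep $X$ uncountable, keep it order-isomorphic to an uncountable subset of the line (so that it genuinely carries the Sorgenfrey topology), and preclude right-separation in all powers.

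Next I would realize $X$ as a subspace of a cometrizable abelian group. Starting from a separable metrizable abelian group topology $\tau_m$ on a group $G\supseteq X$ (for instance a suitable subgroup of a Polish abelian group with the inherited metric group topology), I would refine $\tau_m$ to a group topology $\tau$ by prescribing a base of identity neighborhoods consisting of $\tau_m$-closed sets, chosen so that (i) $(G,\tau)$ is a topological group, (ii) each basic $\tau$-neighborhood is $\tau_m$-closed, giving cometrizability over $\tau_m$, and (iii) the subspace topology induced by $\tau$ on $X$ is exactly the Sorgenfrey topology. The subtlety in (iii) is that a symmetric group neighborhood cannot itself be a half-interval, so one must position $X$ and select the $\tau_m$-closed neighborhoods so that their translates cut out precisely the half-intervals $[x,x+\e)\cap X$ on $X$; checking that the prescribed family satisfies the neighborhood axioms of a group topology is routine but needs care.

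Finally I would verify that $G^\w$ is hereditarily Lindel\"of, where the first step pays off. Suppose $\{g_\xi:\xi<\w_1\}\subseteq G^\w$ were right-separated, witnessed by basic $\tau$-open sets $W_\xi\ni g_\xi$ with $g_\eta\notin W_\xi$ for $\eta>\xi$. Each $W_\xi$ depends on finitely many coordinates and is, on each of them, a basic $\tau$-neighborhood, that is a $\tau_m$-neighborhood modified by the Sorgenfrey correction on $X$. Passing to a $\Delta$-system and pressing down, I would reduce to a fixed finite coordinate set $F$; since $\tau_m^\w$ is second countable, hence hereditarily Lindel\"of, the separating behaviour cannot originate from the metric factor, and so the family must project to an uncountable right-separated subset of the finite Sorgenfrey power $X^F$, contradicting the construction of $X$. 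Guaranteeing that the metric factor contributes no right-separation -- so that every right-separated family in $G^\w$ reflects into a finite Sorgenfrey power of $X$ -- is the technical crux of this last step. With $G$ so constructed, it is cometrizable, contains the uncountable Sorgenfrey subspace $X$ (hence is not cosmic), and $G^\w$ is hereditarily Lindel\"of, hence of countable spread, as required.
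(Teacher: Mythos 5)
Your preliminary reductions (the network argument showing an uncountable Sorgenfrey subspace forbids cosmicity, and discreteness versus hereditary Lindel\"ofness) are fine, but the decisive step of your plan --- item (iii), a group topology whose translated identity neighborhoods trace out the half-intervals on $X$ --- is asserted rather than constructed, and it is exactly the hard part of the theorem. Concretely, write your group additively; every topological group has a base of symmetric neighborhoods $V=-V$ of the identity. For such a $V$ put $A_V=\{x\in X:(x+V)\cap X\subseteq\{y\in X:y\ge x\}\}$. If $x,y\in A_V$ are distinct and $y\in x+V$, say $y>x$, then by symmetry $x\in y+V$, whence $x\ge y$, a contradiction; so $(x+V)\cap A_V=\{x\}$ and $A_V$ is discrete in $G$. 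Since your $G$ must be hereditarily Lindel\"of (it is a subspace of $G^\w$), each $A_V$ is countable. Thus a fixed symmetric neighborhood can serve as a ``half-interval'' neighborhood at only countably many points of $X$: the base at the identity must be uncountable, and the assignment $x\mapsto V_x$ of witnessing neighborhoods must be genuinely non-uniform --- this is precisely the tension that Lemma~\ref{l:main} exploits to produce discrete sets in such groups. You offer no mechanism for building an uncountable, coherent system of $\tau_m$-closed identity neighborhoods with these traces that still satisfies the group-topology axioms, so the heart of the theorem is missing; calling it ``routine but needs care'' begs the question. The paper avoids any such head-on refinement: it discards the additive structure of $\mathbb R$ altogether and embeds $\IS$ into the function space $C_k(\IS)$ with pointwise operations, via $x\mapsto\chi_x$ where $\chi_x^{-1}(1)=[-x,\infty)$ (Lemma~\ref{l:emb}), proving cometrizability of $C_k(\IS)$ from the $k$-separability of $\IS$ (Lemmas~\ref{l:Qd} and \ref{l:Ck}); there the uncountably many ``asymmetric'' neighborhoods come for free from the compact-open sets $[K,U]$.

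Your final step is also broken as stated. Points of $G^\w$ are sequences of group words in elements of $X$, not sequences of elements of $X$, so a right-separated family in $G^\w$ has no ``projection to an uncountable right-separated subset of the finite Sorgenfrey power $X^F$''; nor does $G$ decompose as a product of a metric factor and a Sorgenfrey factor, so the claim that ``the separating behaviour cannot originate from the metric factor'' has no precise meaning. The correct route (and the paper's) is through continuous images, which preserve hereditary Lindel\"ofness: the group hull $G$ of $X$ is a continuous image of the countable topological sum $X^{<\w}=\oplus_{n\in\w}X^n$, hence $G^\w$ is a continuous image of $(X^{<\w})^\w$, which embeds into $X^\w$. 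Everything therefore reduces to $X^\w$ being hereditarily Lindel\"of, which under CH is exactly Michael's theorem cited in the paper; there is no need to redo that construction, and your sketch of it would not run anyway, since uncountable right-separated families are not countably coded and so cannot all be enumerated in an $\w_1$-recursion under CH (the known constructions diagonalize against countably coded objects, such as closed nowhere dense sets, and derive the covering properties of powers afterwards). Your instinct that finite powers suffice is salvageable --- hereditary Lindel\"ofness of all finite powers does pass to the countable power, since basic open witnesses of right-separation have finite support --- but the finite powers of $G$ must still be reached from finite powers of $X$ via continuous images of sums, not via projections.
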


Theorem~\ref{t:CH} will be proved in Section~\ref{s:CH}.

\begin{remark} By \cite{PW}, there exists a hereditarily Lindel\"of topological group $G$ whose square is not normal. The topological group $G$ has countable spread but is not cosmic. Corollary~\ref{c:OCA2} implies that the space $G$ is not cometrizable under OCA.
\end{remark}

\begin{remark} Using the Continuum Hypothesis, Hajnal and Juh\'asz \cite{HJ} constructed a hereditarily separable Boolean topological group $G$ with uncountable pseudocharacter. This topological group has countable spread (being hereditarily separable) but is not hereditarily Lindel\"of and not cosmic (because it has uncountable pseudocharacter).
\end{remark}

%Let us also remind the following old open problem of Arhangelskii \cite{Ar}.

%\begin{problem} Assume that a topological group $G$ contains a topological copy of the Sorgenfrey line $\mathbb S$. Does $G$ contains the square $\mathbb S\times \mathbb S$ of the Sorgenfrey line?
%\end{problem}

\section{Proof of Theorem~\ref{t:main}}\label{s:main}

Theorems~\ref{t:main1} and \ref{t:main} will be deduced from the following 

\begin{lemma}\label{l:main} Let $\kappa$ be a cardinal of uncountable cofinality and $X$ be a subspace of the Sorgenfrey line whose square contains a discrete subspace $\Gamma\subset X\times X$ of cardinality $|\Gamma|=\kappa$. If a topological group $G$ contains a subspace homeomorphic to $X$, then $G$ contains a discrete subspace of cardinality $\kappa$.
\end{lemma}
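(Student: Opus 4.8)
The plan is to realize a discrete subspace of $G$ as the image of $\Gamma$ under a suitable continuous map built from the group operations, with the continuity of inversion (which fails for the Sorgenfrey line as a paratopological group) doing the essential work. First I would identify $X$ with its homeomorphic copy in $G$, so that $X\subseteq G$ carries the Sorgenfrey topology as its subspace topology, and write $\Gamma=\{(x_\alpha,y_\alpha):\alpha<\kappa\}$. Since $\mathrm{cf}(\kappa)>\w$, a pigeonhole over the decomposition $\Gamma=\bigcup_{n\in\IN}\{(x,y)\in\Gamma:([x,x+1/n)\times[y,y+1/n))\cap\Gamma=\{(x,y)\}\}$ yields an index set $A_0$ of cardinality $\kappa$ and a single $\e>0$ such that for all $\alpha\in A_0$ the half-open box $[x_\alpha,x_\alpha+\e)\times[y_\alpha,y_\alpha+\e)$ isolates $(x_\alpha,y_\alpha)$ in $\Gamma$. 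Thus I may assume a \emph{uniform} $\e$ of discreteness.

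Next I would extract the local neighborhood control coming from the coincidence of the subspace and Sorgenfrey topologies. For each $\alpha$ the sets $[x_\alpha,x_\alpha+\e)\cap X$ and $[y_\alpha,y_\alpha+\e)\cap X$ are relatively open, so there is a symmetric neighborhood $U_\alpha$ of the identity $e$ in $G$ with $(U_\alpha x_\alpha)\cap X\subseteq[x_\alpha,x_\alpha+\e)$ and $(U_\alpha y_\alpha)\cap X\subseteq[y_\alpha,y_\alpha+\e)$; that is, left translates of $x_\alpha$ and $y_\alpha$ by $U_\alpha$ that remain in $X$ can only move the corresponding real coordinate upward by less than $\e$. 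I would then consider the map $d\colon\Gamma\to G$, $d(x_\alpha,y_\alpha)=x_\alpha y_\alpha^{-1}$, which is continuous precisely because $G$ is a topological group (in a paratopological group inversion is discontinuous, and this is exactly why the Sorgenfrey line itself evades the conclusion). Writing $d_\alpha=x_\alpha y_\alpha^{-1}$, the candidate discrete subspace of $G$ is $\{d_\alpha:\alpha\in A\}$ for a suitable $A\subseteq A_0$ of cardinality $\kappa$.

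The isolation mechanism is as follows. Put $W_\alpha=U_\alpha d_\alpha U_\alpha$ and suppose $d_\beta\in W_\alpha$ arises from the \emph{expected factorization} $x_\beta\in U_\alpha x_\alpha$, $y_\beta\in U_\alpha y_\alpha$ (so that $d_\beta=(ux_\alpha)(vy_\alpha)^{-1}=u d_\alpha v^{-1}$): then the two controls force $x_\beta\in[x_\alpha,x_\alpha+\e)$ and $y_\beta\in[y_\alpha,y_\alpha+\e)$, so $(x_\beta,y_\beta)$ lies in the isolating box of $(x_\alpha,y_\alpha)$ and hence $\beta=\alpha$. This step uses nothing beyond the uniform discreteness, and it shows that, \emph{once the factorizations are under control}, the neighborhoods $W_\alpha$ witness that $\{d_\alpha\}$ is discrete.

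I expect the genuine difficulty to be exactly the factorization problem: the membership $d_\beta=x_\beta y_\beta^{-1}\in U_\alpha x_\alpha y_\alpha^{-1}U_\alpha$ also has \emph{spurious} solutions of the form $x_\beta\in U_\alpha x_\alpha t$, $y_\beta\in U_\alpha y_\alpha t$ for a common $t\neq e$, for which the controls say nothing and for which $d$ may even fail to be injective. The core of the proof must therefore be the passage from $A_0$ to a subset $A$ of cardinality $\kappa$ on which $d$ is a homeomorphic embedding onto a discrete set, i.e. on which all such spurious coincidences are eliminated. Here I would use the uncountable cofinality of $\kappa$ together with the fact that $X$ and $X^{-1}$, being Sorgenfrey subspaces, have countable spread (hence are hereditarily Lindel\"of), to bound the bad pairs and run a combinatorial selection (of free-set or pressing-down type) producing $A$. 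This elimination of spurious factorizations, rather than the isolation computation itself, is where I anticipate the main work of the lemma to lie.
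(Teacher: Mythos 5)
Your proposal is not a proof: the step you yourself single out as the place ``where the main work of the lemma lies'' --- the elimination of spurious factorizations of a membership $d_\beta\in U_\alpha d_\alpha U_\alpha$ --- is precisely the content of the lemma, and you leave it to an unspecified selection argument ``of free-set or pressing-down type.'' Nothing in your setup bounds, for a fixed $\alpha$, the number of indices $\beta$ with $d_\beta\in W_\alpha$ via a spurious factorization, so free-set theorems do not apply as stated; and the supporting remark that $X$ and $X^{-1}$ ``have countable spread (hence are hereditarily Lindel\"of)'' rests on a false implication in general (subspaces of $\IS$ are hereditarily Lindel\"of because $\IS$ itself is, not because countable spread implies it). So the gap is genuine: the mechanism that kills \emph{all} factorizations, expected or not, is missing from your sketch.

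It is instructive to see how the paper fills exactly this gap, because it does so structurally rather than combinatorially, using three devices none of which appears in your plan. First, after the same uniformization you perform (the paper stabilizes the isolating boxes to rational right endpoints $u,v$), it observes that uniform boxes force $\Gamma$ to be the graph of a \emph{strictly decreasing} function $f:\Gamma_1\to\Gamma_2$: two points on one vertical line, or a non-decreasing pair, would place one point inside the other's isolating box. Second, in place of your $\e$-control it takes order-control closed under the group operations on both sides: neighborhoods $V_x\ni e$ with $X\cap(V_x^{-1}V_xx\cup xV_xV_x^{-1})\subset{\uparrow}x$. Third --- the decisive trick --- it runs a \emph{second} uniformization inside the group: radii $r_x$ with $[x,r_x)\subset xV_{f(x)}$ for $x\in\Gamma_1$ and $[y,r_y)\subset V_{f^{-1}(y)}y$ for $y\in\Gamma_2$ are stabilized to rational constants $c,d$ on a set $Z\subset\Gamma_1$ of size $\kappa$; this guarantees that distinct points of $Z$ lie in each other's controlled translates (if $x>z$ in $Z$ then $x\in[z,c)\subset zV_{f(z)}$, and similarly on the $f$-side). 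Then, for $D=\{z\cdot f(z):z\in Z\}$ (products, not your $x_\alpha y_\alpha^{-1}$), any coincidence $x\cdot f(x)\in z(V_z\cap V_{f(z)})f(z)$ with $x\ne z$ --- however it happens to factor --- is refuted by cancellation plus monotonicity: if $x>z$, then $f(x)=x^{-1}\bigl(xf(x)\bigr)\in V_{f(z)}^{-1}V_{f(z)}f(z)$, whence $f(x)\in{\uparrow}f(z)$, contradicting that $f$ is strictly decreasing; if $x<z$, then $f(x)\in[f(z),d)\subset V_zf(z)$ and cancellation gives $x\in zV_zV_z^{-1}\subset{\uparrow}z$, again a contradiction. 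Without the monotone-graph observation and the second uniformization, your case analysis has no way to handle the arbitrary factorizations, and that is exactly where your argument stops.
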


\begin{proof} We shall identify the subspace $X$ of the Sorgenfrey line with a subspace of the topological group $G$. For every $x\in X$ and a rational number $q>x$ let $[x,q)=\{y\in X:x\le y<q\}$ be the order half-interval in $X$. Let also ${\uparrow}x=\{y\in X:x\le y\}$. 
By the definition of the Sorgenfrey topology, the countable family $\{[x,q):x<q\in\mathbb Q\}$ is a neighborhod base at $x$ in the space $X$.

Since the subspace $\Gamma\subset X\times X$ is discrete, each point $(x,y)\in \Gamma$ has a neighborhood $O_{(x,y)}\subset X\times X$ such that $\Gamma\cap O_{(x,y)}=\{(x,y)\}$. Find rational numbers $u_{(x,y)},v_{(x,y)}$ such that $(x,y)\in [x,u_{(x,y)})\times [y,v_{(x,y)})\subset O_{(x,y)}$. Since the cardinal $|\Gamma|=\kappa$ has uncountable cofinality, for some rational numbers $u,v$ the set $\Gamma'=\{(x,y)\in \Gamma:u_{(x,y)}=u,\;v_{(x,y)}=v\}$ has cardinality $|\Gamma'|=|\Gamma|$. Replacing the set $\Gamma$ by the set $\Gamma'$, we can assume that $u_{(x,y)}=u$ and $v_{(x,y)}=v$ for all $(x,y)\in \Gamma$.  

Let $\Gamma_1:=\{x\in X:\exists y\in X\;(x,y)\in\Gamma\}$ and $\Gamma_2=\{y\in X:\exists x\in X\;(x,y)\in\Gamma\}$ be the projections of the set $\Gamma\subset X\times X$ onto the coordinate axes.
We claim that $\Gamma$ coincides with the graph of some strictly decreasing function $f:\Gamma_1\to\Gamma_2$. First observe that for any $x\in \Gamma_1$ there exists a unique $y\in\Gamma$ with $(x,y)\in\Gamma$. Otherwise we could find two real numbers $y_1<y_2$ with $(x,y_1),(x,y_2)\in\Gamma$ and conclude that $$(x,y_2)\in [x,u)\times [y_2,v)\subset  [x,u)\times [y_1,v)\subset O_{(x,y_1)},$$
which contradicts the choice of the neighborhood $O_{(x,y_1)}$.
This contradiction shows that $\Gamma$ coincides with the graph of some function $f:\Gamma_1\to \Gamma_2$. Let us show that this function is strictly decreasing. Assuming that this is not true, we could find two points $(x_1,y_1),(x_2,y_2)\in\Gamma$ with $x_1<x_2$ and $y_1\le y_2$. Then 
 $$(x_2,y_2)\in [x_2,u)\times [y_2,v)\subset 
[x_1,u)\times [y_1,v)\subset O_{(x_1,y_1)},$$
which contradicts the choice of the neighborhood $O_{(x_1,y_1)}$.

Therefore the function $f:\Gamma_1\to \Gamma_2$ is strictly decreasing, which implies that $|\Gamma_1|=|\Gamma_2|=|\Gamma|=\kappa$.
 
For any point $x\in X$ choose a neighborhood $V_x\subset G$ of the unit $e$ of $G$ such that $$X\cap (V_x^{-1}V_xx\cup xV_xV_x^{-1})\subset {\uparrow}x.$$
Next, for every point $x\in X$, choose a rational point $r_x>x$ such that $[x,r_x)\subset xV_{f(x)}$ if $x\in \Gamma_1$ and $[x,r_x)\subset V_{f^{-1}(x)}x$ if $x\in \Gamma_2$. Since the cardinal $|\Gamma_1|=\kappa$ has uncountable cofinality, for some $c,d\in \IQ$ the set $Z=\{z\in \Gamma_1:r_z=c,\; r_{f(z)}=d\}$ has cadinality $\kappa$. 

We claim that the subspace $D:=\{z\cdot f(z):z\in Z\}$ has cardinality $\kappa$ and is discrete in $G$. For every $z\in Z$ consider the neighborhood $z(V_z\cap V_{f(z)})f(z)$ of the point $z\cdot f(z)$ in $G$. We claim that  $x\cdot f(x)\notin z(V_z\cap V_{f(z)})f(z)$ for any $x\in Z\setminus\{z\}$. To derive a contradiction, assume that $x\cdot f(x)\in zV_{f(z)}f(z)$ for some $x\ne z$ in $Z$.

If $x>z$, then $x\in [z,r_z)\subset zV_{f(z)}$ and $$f(x)=x^{-1}xf(x)\in x^{-1}zV_{f(z)}f(z)\subset V_{f(z)}^{-1}z^{-1}zV_{f(z)}f(z)=V_{f(z)}^{-1}V_{f(z)}f(z).$$Then $f(x)\in X\cap V_{f(z)}^{-1}V_{f(z)}f(z)\subset {\uparrow}f(z)$ and $f(x)\ge f(z)$, which is not possible as $x>z$ and $f$ is strictly decreasing.

If $z>x$, then $f(x)>f(z)$ and $f(x)\in [f(x),r_{f(x)})=[f(x),d)\subset [f(z),d)= [f(z),r_{f(z)})\subset V_{z}f(z)$ and then
$$x\in zV_zf(z)f(x)^{-1}\subset zV_zf(z)f(z)^{-1}V_z^{-1}=zV_zV_z^{-1}\subset{\uparrow}z$$ which contradicts $z>x$.
  \end{proof}
\smallskip

\noindent{\em Proof of Theorem~\ref{t:main1}.} Assume that a topological group $G$ contains a topological copy of the Sorgenfrey line $\mathbb S$. Observe that the square of $\mathbb S$ contains a discrete subset $\Gamma=\{(x,-x):x\in\mathbb S\}$ of cardinality continuum $\mathfrak c$. By \cite[5.12]{Jech}, the continuum has uncountable cofinality. Applying Lemma~\ref{l:main},  we conclude that the topological group $G$ contains a discrete subspace of cardinality $\mathfrak c$. \hfill $\square$
\smallskip

\noindent{\em Proof of Theorem~\ref{t:main}.} Let $G$ be a topological group $G$ containing a subspace $X$, homeomorphic to an uncountable subspace of the Sorgenfrey line. Assuming that $s(G)<s(X\times X)$, we conclude that $s(X\times X)\ge \kappa^+$ for the cardinal $\kappa=s(G)$. Then $X\times X$ contains a discrete subspace $D$ of cardinality $|D|=\kappa^+$, which has uncountable cofinality. In this case we can apply Lemma~\ref{l:main} and conclude that $G$ contains a discrete subspace of cardinality $\kappa^+$, which implies that $\kappa=s(G)\ge \kappa^+>\kappa$ and this is a desired contradiction.\hfill $\square$

\section{Proof of Theorem~\ref{t:CH}}\label{s:CH}

In this section we prove Theorem~\ref{t:CH}. But first we prove that the Sorgenfrey line $\IS$ embeds into a cometrizable topological group. In the proof of this embedding result, we use the $k$-separability of $\IS$.

A subset $D$ of a topological space $X$ is called {\em $k$-dense} in $X$ if each compact subset $K\subset X$ is contained in a compact set $\tilde K\subset X$ such that the intersection $D\cap\tilde K$ is dense in $\tilde K$. 

A topological space $X$ is defined to be {\em $k$-separable} if it contains a countable $k$-dense subset. 

\begin{lemma}\label{l:Qd} The set $\IQ$ of rational numbers is $k$-dense in the Sorgenfrey line $\mathbb S$.
\end{lemma}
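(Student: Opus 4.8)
The plan is to reduce the whole statement to a clean structural description of the compact subsets of $\IS$ and then to use it to enlarge a given compact set into one on which $\IQ$ is dense. First I would prove the following characterization: a set $K\subseteq\IS$ is compact in the Sorgenfrey topology if and only if $K$ is compact in the Euclidean topology (equivalently, closed and bounded in $\mathbb R$) and every point of $K$ is isolated from the left, i.e. for each $x\in K$ there is $\e>0$ with $(x-\e,x)\cap K=\emptyset$. The forward implication splits in two: since the Sorgenfrey topology refines the Euclidean one, Sorgenfrey-compactness yields Euclidean-compactness; and if some $x\in K$ were a left-accumulation point, a strictly increasing sequence $x_n\uparrow x$ in $K$ would be an infinite subset with no accumulation point in $\IS$ (no basic neighborhood $[y,y+\delta)$ contains infinitely many $x_n$), hence an infinite closed discrete subset of the compact space $K$, which is impossible. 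For the converse, given a Sorgenfrey open cover I would choose for each $x\in K$ a basic set $[x,x+\e_x)$ contained in some member of the cover and, using left-isolation, enlarge it to a Euclidean-open interval $(x-\delta_x,x+\e_x)$ that meets $K$ in the same set; Euclidean-compactness then delivers a finite subcover. A byproduct (via $x\mapsto$ a rational chosen in $(x-\e_x,x)$) is that every compact subset of $\IS$ is countable and, crucially, has no left-accumulation points.

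With this in hand, fix a compact set $K\subseteq\IS$. The convex hull $[\min K,\max K]$ minus $K$ is a countable union of maximal open gaps $(a,b)$ with left endpoint $a\in K$; I would also adjoin the artificial gap $(\max K,\max K+1)$ to treat the largest point. In each gap $(a,b)$ I insert a strictly decreasing sequence of rationals $q^{(a,b)}_n\downarrow a$ chosen inside a small right-window $(a,a+\eta)$ of $a$, and put $\tilde K=K\cup\bigcup\{q^{(a,b)}_n\}$. Density of $\IQ\cap\tilde K$ in $\tilde K$ is then straightforward to verify: a basic neighborhood of an added rational contains that rational; a basic neighborhood $[x,x+\delta)$ of a right-isolated point $x\in K$ contains the tail of the sequence attached to the gap beginning at $x$; and if $x\in K$ is a right-accumulation point, then $(x,x+\delta)$ contains an entire gap $(a,b)$ with $a\in K$ and $a>x$, whose attached rationals eventually lie in $(x,x+\delta)$.

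Finally I would check that $\tilde K$ is again Sorgenfrey-compact by verifying the two conditions of the characterization. Boundedness is clear, and Euclidean-closedness holds because the only Euclidean limit points of the added rationals are the left endpoints $a\in K$ (each inserted sequence converges to its own $a$) together with limits of such endpoints, which lie in the closed set $K$. The delicate point is left-isolation of $\tilde K$: each added rational is separated from the next term below it by an interval lying inside its own gap and disjoint from the other gaps, while each $x\in K$ remains left-isolated because the added rationals lying to the left of $x$ accumulate only to gap endpoints $a<x$ and hence stay bounded away from $x$ — unless infinitely many such endpoints approached $x$ from the left, which is exactly what the absence of left-accumulation points in $K$ forbids. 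This is the crux, and the main obstacle: controlling left-isolation near the right-accumulation points of $K$. It is precisely here that the structural characterization of the first paragraph carries the argument, while shrinking the windows $\eta$ keeps the inserted rationals from interfering across distinct gaps.
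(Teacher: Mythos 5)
Your proof is correct, but it follows a genuinely different route from the paper's. The paper does not analyze the structure of compact subsets of $\IS$ at all: it first observes that a Sorgenfrey-compact set $K$ is metrizable (the identity from $(K,\text{Sorgenfrey})$ to $(K,\text{Euclidean})$ is a continuous bijection from a compact space onto a Hausdorff one, hence a homeomorphism), takes a countable dense sequence $\{x_n\}_{n\in\w}$ in $K$, inserts rationals $x_{n,k}$ with $x_n<x_{n,k}<x_n+2^{-(n+k)}$, and proves compactness of $\tilde K=K\cup\{x_{n,k}\}_{n,k\in\w}$ by a direct open-cover argument: after extracting a finite subcover $\{[x,b_x):x\in F\}$ of $K$, the positive slacks $\e_x=b_x-\max\bigl(K\cap[x,b_x)\bigr)$ guarantee that all but finitely many of the inserted points (those with $n+k\le m$ for suitable $m$) already lie in $\bigcup_{x\in F}[x,b_x)$. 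You instead prove a standalone characterization — $K\subseteq\IS$ is compact iff it is Euclidean-compact and every point is left-isolated — and then work with the maximal-gap decomposition of $[\min K,\max K]\setminus K$, inserting a rational sequence decreasing to each gap's left endpoint and re-verifying both conditions of the characterization for $\tilde K$. Your approach buys more: the characterization itself, countability of Sorgenfrey-compact sets as a byproduct, and a very transparent density check (every point of $K$ either heads a gap or has gaps accumulating from the right). The paper's approach buys brevity: no gap bookkeeping, no closure/left-isolation analysis of the enlarged set, with the quantitative choice of $2^{-(n+k)}$ doing all the work in a half-page cover argument. Do note that your density verification in the right-accumulation case silently reuses left-isolation (any $y\in K\cap(x,x+\delta)$ is left-isolated, so the gap immediately to its left lies inside $(x,y)$), and that your left-isolation check for $\tilde K$ requires the windows to satisfy $\eta_{(a,b)}<b-a$ (or similar) so inserted points stay inside their own gaps — both points you flag, and both are easily discharged, so the argument is complete in outline.
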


\begin{proof} Given a compact set $K\subset\IS$, observe that $K$ is metrizable and hence contains a countable dense subset $\{x_n\}_{n\in\w}\subset K$. For every $n,k\in\w$ fix a  rational numbers $x_{n,k}$ such that $x_n<x_{n,k}<x_n+\frac1{2^{n+k}}$. We claim that the subset $\tilde K=K\cup\{x_{n,k}\}_{n,k\in\w}$ is compact. Indeed, let $\mathcal U$ be a cover of $\tilde K$ by open subsets of $\IS$. For every $x\in K$ find a set $U_x\in\mathcal U$ with $x\in U_x$ and a  real number $b_x$ such that $[x,b_x)\subset U_x$. By the compactness of $K$ the open cover $\{[x,b_x):x\in K\}$ of $K$ has a finite subcover $\{[x,b_x):x\in F\}$ (here $F$ is a suitable finite subset of $K$). For every $x\in F$ the set $[x,b_x)$ is closed in $\IS$ and hence the intersection $K\cap [x,b_x)$ is compact, which implies that the number $\e_x:=b_x-\max \big(K\cap[x,b_x)\big)$ is strictly positive. Choose $m\in\IN$ such that $\frac1{2^m}<\min_{x\in F}\e_x$. Then $\tilde K\setminus\bigcup_{x\in F}[x,b_x)\subset \{x_{n,k}:n+k\le m\}$ is finite and hence is contained in the union $\bigcup\mathcal F$ of some finite subfamily $\F\subset \U$. Then $\F\cup\{U_x:x\in F\}\subset\U$ is a finite subcover of $\tilde K$, witnessing that the subset $\tilde K$ of $\IS$ is compact. By the definition of $\tilde K$, the set $\tilde K\cap \IQ\supset\{x_{n,k}\}_{n,k\in\w}$ is dense in $\tilde K$.
\end{proof} 

Lemma~\ref{l:Qd} implies that the Sorgenfrey line is $k$-separable. Now we prove that for any $k$-separable space $X$ and a cometrizable space $Y$ the function space $C_k(X,Y)$ is cometrizable.
Here for topological spaces $X,Y$ by $C_k(X,Y)$ we denote the space of continuous functions from $X$ to $Y$, endowed with the compact-open topology, which is generated by the subbase consisting of the sets
$$[K,U]:=\{f\in C_k(X,Y):f(K)\subset U\}$$where $K$ is a compact subset of $X$ and $U$ is an open subset of $Y$.

\begin{lemma}\label{l:Ck} For any $k$-separable space $X$ and any cometrizable space $Y$ the function space $C_k(X,Y)$ is cometrizable.
\end{lemma}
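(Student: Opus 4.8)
The plan is to equip $C_k(X,Y)$ with the topology $\tau_m$ of pointwise convergence on a fixed countable $k$-dense set $D=\{d_n:n\in\w\}\subset X$, measured by a metric $\rho$ generating the weaker metrizable topology on $Y$ witnessing cometrizability (in particular $Y$ is Hausdorff). Concretely, $\tau_m$ is the initial topology of the evaluation map $e\colon C_k(X,Y)\to(Y,\rho)^D$, $e(g)=(g(d_n))_{n\in\w}$. First I would check that $\tau_m$ is a metrizable topology weaker than the compact-open topology: it is weaker because each evaluation $e_d$ is compact-open continuous (singletons are compact) and $\rho$ is weaker than the topology of $Y$; and it is metrizable once $e$ is injective, since $(Y,\rho)^D$ is a countable product of metric spaces. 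Injectivity of $e$ is where $k$-density enters: if $g_1,g_2$ agree on $D$, then for every $x\in X$ I apply $k$-density to $K=\{x\}$ to get a compact $\tilde K\ni x$ with $D\cap\tilde K$ dense in $\tilde K$; as $g_1,g_2$ are continuous, agree on the dense set $D\cap\tilde K$, and $Y$ is Hausdorff, they agree on $\tilde K\ni x$, so $g_1=g_2$.

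It then remains to produce, for each $f$ and each compact-open neighborhood of $f$, a $\tau_m$-closed neighborhood inside it; by taking finite intersections it suffices to treat a subbasic set $[K,U]\ni f$, so $f(K)\subset U$ with $K$ compact and $U$ open. The guiding computation is that for any compact $M$ with $D\cap M$ dense in $M$ and any $\rho$-closed $C$ one has $[M,C]=\{g:g(M)\subset C\}=\bigcap_{d\in D\cap M}e_d^{-1}(C)$: the inclusion $\subseteq$ is trivial, and the reverse holds because a $\rho$-closed $C$ is closed in $Y$, so $g^{-1}(C)$ is closed in $X$ and, containing the dense subset $D\cap M$ of $M$, contains $M$. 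Since each $e_d$ is $\tau_m$-continuous into $(Y,\rho)$ and $C$ is $\rho$-closed, every $e_d^{-1}(C)$ is $\tau_m$-closed, hence so is $[M,C]$. Thus I want to manufacture a single compact $M$ with $K\subseteq M$, with $D\cap M$ dense in $M$, and with $f(M)\subset U$; then, covering the compact set $f(M)\subset U$ by finitely many $\rho$-closed neighborhoods lying in $U$ (cometrizability of $Y$), their union $C$ is $\rho$-closed with $f(M)\subset W:=\operatorname{int}C\subset C\subset U$. The set $N:=[M,C]$ is then $\tau_m$-closed, contains $f$, contains the compact-open open set $[M,W]\ni f$ (so is a neighborhood of $f$), and satisfies $N=[M,C]\subset[K,U]$ because $K\subseteq M$ and $C\subset U$.

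The one delicate point---and the step I expect to be the main obstacle---is the existence of such an $M$ with $f(M)\subset U$: bare $k$-density hands us an enlargement $\tilde K\supseteq K$ with $D\cap\tilde K$ dense, but nothing a priori keeps $f$ inside $U$ on the added points. I plan to resolve this by a normality argument inside the compact enlargement. The ``bad set'' $B:=\tilde K\cap f^{-1}(Y\setminus U)$ is compact and disjoint from $K$ (as $f(K)\subset U$); since $X$ is Hausdorff (as we may assume in this setting), $\tilde K$ is compact Hausdorff, hence normal, so I can separate $K$ and $B$ by a relatively open set $O\supseteq K$ with $\operatorname{cl}_{\tilde K}O\cap B=\varnothing$. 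Setting $M:=\operatorname{cl}_{\tilde K}O$ gives a compact set with $K\subseteq M\subseteq\tilde K\setminus B$, so $f(M)\subset U$; and because $D\cap\tilde K$ is dense in $\tilde K$, its trace on the open set $O$ is dense in $O$ and therefore in $M=\operatorname{cl}_{\tilde K}O$, yielding the required density of $D\cap M$ in $M$. Once $M$ is in hand the construction above finishes the proof; the remaining verifications (continuity of evaluations, that a $\rho$-closed set is closed in $Y$, and the finite-intersection reduction) are routine.
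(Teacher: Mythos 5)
Your proposal follows the same overall route as the paper's proof: your $\tau_m$ is exactly the paper's cometrizing topology $\sigma$ (the initial topology of the restriction map into $Y_\tau^D$), and your preliminary verifications --- injectivity of $e$ via $k$-density, metrizability of $\tau_m$, the identity $[M,C]=\bigcap_{d\in D\cap M}e_d^{-1}(C)$ for $\rho$-closed $C$ when $D\cap M$ is dense in $M$, and the reduction to subbasic sets --- are all correct. The gap is exactly at the step you yourself flagged as delicate: to manufacture $M$ you invoke normality of $\tilde K$, which requires $X$ to be Hausdorff, and you dismiss this with ``as we may assume in this setting.'' That assumption is not available. The lemma is asserted for an \emph{arbitrary} $k$-separable space $X$; $k$-separability carries no separation axioms whatsoever (all separation hypotheses sit on $Y$, via cometrizability), and the paper's own proof never uses any separation property of $X$. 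The failure is genuine, not cosmetic: e.g.\ a countably infinite set with the cofinite topology is $k$-separable, compact, and not normal --- there every nonempty open set is dense, so no relatively open $O\supseteq K$ can have $\operatorname{cl}_{\tilde K}O$ disjoint from a nonempty $B$, and your construction of $M$ collapses. As written, your argument proves the lemma only under the extra hypothesis that $X$ is Hausdorff; this does cover the paper's single application ($X=\IS$), but not the statement as given.

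The repair --- and it is precisely the paper's key technical move --- is to do the separating in $Y$ (which really is regular, being cometrizable) and pull it back through $f$, accepting a weaker, relative form of density. Around each $y\in f(K)$ choose a $\rho$-closed neighborhood $C_y\subset U$, put $V_y:=\operatorname{int}C_y$, and by regularity of $Y$ find open sets $N_y,W_y$ with $y\in N_y\subset\overline{N_y}\subset W_y\subset\overline{W_y}\subset V_y$. Cover $K$ by finitely many $f^{-1}(N_{y_j})$, put $K_j:=K\cap f^{-1}(\overline{N_{y_j}})$, enlarge $K_j$ to $\tilde K_j$ by $k$-density, and set $M_j:=\tilde K_j\cap f^{-1}(\overline{W_{y_j}})$. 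Now $D\cap M_j$ need not be dense in $M_j$ (so your identity for $[M,C]$ is lost), but every point $z\in K_j$ still lies in $\overline{D\cap M_j}$: indeed $z$ lies in the open set $f^{-1}(W_{y_j})$ and in the closure of $D\cap\tilde K_j$, so every neighborhood of $z$ meets $D\cap\tilde K_j\cap f^{-1}(W_{y_j})\subset D\cap M_j$. This relative density suffices if you take as your closed neighborhood the set $\bigcap_j\bigcap_{d\in D\cap M_j}e_d^{-1}(C_{y_j})$ rather than $[M,C]$: it is $\tau_m$-closed; it contains the compact-open neighborhood $\bigcap_j[M_j,V_{y_j}]$ of $f$ (since $f(M_j)\subset\overline{W_{y_j}}\subset V_{y_j}$); and any $g$ in it has $g^{-1}(C_{y_j})$ closed and containing $D\cap M_j$, hence containing $K_j$, so that $g(K)=\bigcup_jg(K_j)\subset U$. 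With this modification your argument proves the lemma in its stated generality, and it then coincides in substance with the proof in the paper.
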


\begin{proof} Let $D$ be a countable $k$-dense set in $X$ and $\tau$ be a metrizable topology on $Y$, witnessing that the space $Y$ is cometrizable.
By $Y_\tau$ we denote the metrizable topological space $(Y,\tau)$.

The density of the set $D$ in $X$ ensures that the restriction operator
$$r:C_k(X,Y)\to Y_\tau^D,\;\;r:f\mapsto f{\restriction}D,$$
is injective. Let $\sigma$ be the (metrizable) topology on $C_k(X,Y)$ such that the map $$r:(C_k(X,Y),\sigma)\to Y_\tau^D$$ is a topological embedding. We claim that the topology $\sigma$ witnesses that the space $C_k(X,Y)$ is cometrizable.

Fix any function $f\in C_k(X,Y)$ and an open neighborhood $O_f\subset C_k(X,Y)$. Without loss of generality, $O_f$ is of basic form $O_f=\bigcap_{i=1}^n[K_i,U_i]$ for some non-empty compact sets $K_1,\dots,K_n\subset  X$ and some open sets $U_1,\dots,U_n\subset Y$. For every $i\le n$ and point $x\in K_i$, find a neighborhood $V_{f(x)}\subset Y$ of $f(x)\in U_i$ whose $\tau$-closure $\overline{V}^\tau_{f(x)}$ is contained in $U_i$. Using the regularity of the cometrizable space $Y$, find two open neighborhoods $N_{f(x)},W_{f(x)}$ of $f(x)$ such that $\overline{N}_{f(x)}\subset W_{f(x)}\subset\overline{W}_{f(x)}\subset V_{f(x)}$.

By the compactness of $K_i$, the open cover $\{f^{-1}(N_{f(x)}):x\in K_i\}$ of $K_i$ has a finite subcover $\{f^{-1}(N_{f(x)}):x\in F_i\}$ where $F_i\subset K_i$ is a finite subset of $K_i$. By the $k$-density of $D$ in $X$, for every $x\in F_i$ the compact set $K_{i,x}:=K_i\cap f^{-1}(\bar N_{f(x)})$ can be enlarged to a compact set $\tilde K_{i,x}\subset X$ such that $K_{i,x}$ is contained in the closure of the set $\tilde K_{i,x}\cap D$. Replacing the set $\tilde K_{i,x}$ by $\tilde K_{i,x}\cap f^{-1}(\overline{W}_{f(x)})$, we can assume that $f(\tilde K_{i,x})\subset\overline{W}_{f(x)}\subset V_{f(x)}$.

Consider the open neighborhood $$V_f=\bigcap_{i=1}^n\bigcap_{x\in F_i}[\tilde K_{i,x},V_{f(x)}]$$of $f$ in the function space $C_k(X,Y)$. We claim that its $\sigma$-closure $\overline{V}^\sigma_f$ is contained in $O_f$. 

Given any function $g\notin O_f$, we should find a neighborhood $O_g\in\sigma$ of $g$ that does not intersect $V_f$. Since $g\notin O_f$, there exists $i\le n$ and a point $z\in K_i$ such that $g(z)\notin U_i$. Find a point $x\in F_i$ with $z\in K_{i,x}$. Taking into account that $\overline{V}^\tau_{f(x)}\subset U_i\subset Y\setminus\{g(z)\}$, we conclude that $g(z)\notin \overline{V}^\tau_{f(x)}$. Since the point $z$ belongs to the closure of the set $\tilde K_{i,n}\cap D$, the continuity of the function $g:Z\to Y_\tau$ yields a point $d\in \tilde K_{i,n}\cap D$ such that $g(d)\notin  \overline{V}^\tau_{f(x)}$. Then $O_g:=\big[\{d\},Y\setminus \overline{V}^\tau_{f(x)}\big]\in\sigma$ is a required $\sigma$-open neighborhood of $g$ that is disjoint with the neighborhood $V_f$.
\end{proof} 

\begin{lemma}\label{l:emb} The Sorgenfrey line $\IS$ admits a topological embedding into the cometrizable locally convex linear vector space $C_k(\IS)$.
\end{lemma}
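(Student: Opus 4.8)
The plan is to separate the two assertions of the statement and handle the ``ambient space'' part by citing the machinery already built, leaving only the construction of one concrete embedding. Since $\mathbb R$ is metrizable, hence cometrizable, and $\IS$ is $k$-separable by Lemma~\ref{l:Qd}, Lemma~\ref{l:Ck} gives that $C_k(\IS)=C_k(\IS,\mathbb R)$ is cometrizable; that it is a locally convex linear topological space is standard, its topology being generated by the seminorms $p_K(f)=\sup_{t\in K}|f(t)|$ over compact $K\subset\IS$. So everything reduces to producing a topological embedding $e\colon\IS\to C_k(\IS)$.

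My candidate is the map sending $x$ to the indicator function of a half-line, but with a reflection: $e(x)=\mathbf 1_{[-x,\infty)}$, that is, $e(x)(t)=1$ if $t\ge -x$ and $e(x)(t)=0$ otherwise. Each $[-x,\infty)$ is clopen in $\IS$, so $e(x)\in C(\IS)$, and $e$ is visibly injective. The reflection is essential, and I would stress this: the naive map $x\mapsto\mathbf 1_{[x,\infty)}$ is not even continuous, because as $x$ decreases to $x_0$ the functions differ on $[x_0,x)$, an interval lying just to the \emph{right} of $x_0$, where compact sets are allowed to accumulate. Replacing $x$ by $-x$ moves the region of disagreement to the left of the relevant point, which is exactly the side from which compact subsets of $\IS$ cannot accumulate.

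The key supporting fact, which I would isolate first, is that for every compact $K\subset\IS$ and every $c\in\mathbb R$ there is $\eta>0$ with $K\cap[c-\eta,c)=\varnothing$; equivalently, no point is a limit of $K$ from the left. This follows by applying compactness to the open cover $\{(-\infty,c-\tfrac1n):n\ge1\}\cup\{[c,\infty)\}$ of $K$ and extracting a finite subcover. Granting this, continuity of $e$ at $x_0$ is immediate: given compact $K$, take $\eta$ for $c=-x_0$; then for $x\in[x_0,x_0+\eta)$ the functions $e(x)$ and $e(x_0)$ differ only on $[-x,-x_0)\subseteq[-x_0-\eta,-x_0)$, which misses $K$, so $p_K(e(x)-e(x_0))=0$. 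Thus $e$ is continuous on the Sorgenfrey line.

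It remains to check that $e$ is open onto its image, and here I would read off the Sorgenfrey neighborhood directly from two point-evaluations. For the basic neighborhood $[x_0,x_0+\delta)$ put $a=-x_0$ and $b=-(x_0+\delta)$; since $e(x)(a)=\mathbf 1_{x\ge x_0}$ and $e(x)(b)=\mathbf 1_{x\ge x_0+\delta}$, and these values lie in $\{0,1\}$, the compact-open neighborhood $\{g:|g(a)-1|<\tfrac12,\ |g(b)|<\tfrac12\}$ of $e(x_0)$ meets $e(\IS)$ in exactly $e([x_0,x_0+\delta))$. Hence $e$ is a homeomorphism onto its image. The one genuinely delicate point, and the step I expect to be the crux, is getting the handedness right: the asymmetry of $\IS$ (neighborhoods extend to the right, compacta may accumulate only from above) must be matched by the reflection in the definition of $e$, and the whole argument hinges on the left-non-accumulation property of compact subsets of $\IS$ recorded above.
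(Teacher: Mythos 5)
Your proposal is correct and follows essentially the same route as the paper: the same reduction via Lemma~\ref{l:Qd} and Lemma~\ref{l:Ck} (with $Y=\mathbb R$) to get cometrizability, and the very same embedding $x\mapsto \mathbf 1_{[-x,\infty)}$ (the paper's $\chi_x$ with $\chi_x^{-1}(1)=[-x,\infty)$). The only difference is that the paper states the embedding property as an observation, whereas you supply the verification (left-non-accumulation of compact sets for continuity, two point-evaluations for openness onto the image), and your verification is sound.
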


\begin{proof} By Lemma~\ref{l:Qd}, the Sorgenfrey line $\IS$ is $k$-separable, and by Lemma~\ref{l:Ck}, the function space $C_k(\IS)$ is cometrizable. It remains to observe that the map $\chi:\IS\to C_k(\IS)$ assigning to each point $x\in\IS$ the function $\chi_x:\IS\to\{0,1\}$ defined by $\chi_x^{-1}(1)=[-x,\infty)$ is a topological embedding of $\IS$ into  the function space $C_k(\IS)$, which has the structure of a locally convex topological vector space.
\end{proof}

\noindent{\em Proof of Theorem~\ref{t:CH}}. By Lemma~\ref{l:emb}, the Sorgenfrey line $\IS$ can be identified with a subspace of some cometrizable Abelian topological group $H$. According to a result of Michael \cite{Mich}, under CH the Sorgenfrey line contains an uncounatble subspace $X$ whose countable power  $X^\w$ is hereditarily Lindel\"of. Observe that the topological sum $X^{<\w}=\oplus_{n\in\w}X^n$ of finite powers of $X$ admits a topological embedding into $X^\w$, which implies that $X^{<\w}$ is hereditarily Lindel\"of as well as its countable power $(X^{<\w})^\w$. 

Observing that the group hull $G$ of $X$ in the group $H\supset\IS\supset X$ is a continuous image of $X^{<\w}$, we conclude that the space $G$ is hereditarily Lindel\"of. Moreover,  the countable power $G^\w$ is hereditarily Lindel\"of, being a continuous image of the hereditarily Lindel\"of space $(X^{<\w})^\w$.\hfill $\square$

\end{document}